\def\Rset{\mathbb{R}}
\theoremstyle{plain}
\newtheorem{thm}{Theorem}[section]
\newtheorem{cor}[thm]{Corollary}
\newtheorem{prop}[thm]{Proposition}
\theoremstyle{definition}
\newtheorem{defn}[thm]{Definition}
\newtheorem{rem}[thm]{Remark}
\newtheorem{exmp}[thm]{Example}
\title{\bf Why and How the Definition of the Conformable Derivative in the Lower Terminal Should be Changed}
\author{Hristo Kiskinov $^{1}$, Milena Petkova $^{2}$, Andrey Zahariev $^{3}$\vspace{0.2cm}\\ 
Faculty of Mathematics and Informatics, University of Plovdiv, \\Plovdiv 4000, Bulgaria\vspace{0.2cm}\\
%Faculty of Mathematics and Informatics,\\ University of Plovdiv Paisii Hilendarski, \\24 Tzar Asen, 4000 Plovdiv, Bulgaria\vspace{0.2cm}\\
$^{1}$  kiskinov@uni-plovdiv.bg;
$^{2}$  milenapetkova@uni-plovdiv.bg;\\
$^{3}$  zandrey@uni-plovdiv.bg
}%\orcidA{0000-0002-3912-4866}
\date{}
\begin{document}
\maketitle

% Abstract (Do not insert blank lines, i.e. \\) 
\begin{abstract}

This paper discusses some unusual consequences raised by the definition of the conformable derivative 
in the lower terminal. 
A replacement for this definition is proposed and statements adjusted to the new definition are presented.
	
\textbf{Keywords:}
Conformable derivatives; Fractional-like derivatives

{\bf 2010 MSC:} 26A33, 26A24, 34A08, 34A34
\end{abstract}

\section{Introduction}
\label{sec:introduction}

The fractional  calculus (now almost 330 year old) has attracted many researches in the last and present centuries.
There are a lot of definitions of fractional derivatives with different properties.
For a good introduction on the fractional calculus theory and fractional differential equations 
with the "classical" nonlocal fractional derivatives of Riemann-Liouville and Caputo,
see the monographs of Kilbas et al. \cite{KST06}, Samko et al. \cite{SKM93} and Podlubny \cite{Pod99}. 

In 2014, Khalil, Al Horani, Yousef and Sababheh \cite{KHYS14} 
introduced a definition of a local kind derivative called from the authors conformable fractional derivative. 
As an important reason for its introduction is specified the fact 
that this derivative satisfies a big part from the well-known properties of the integer order derivatives. 

In 2015, Abdeljawad \cite{Abd15} made an extensive research of the newly introduced conformable fractional calculus. 
In \cite{Mar18} Martynyuk (there  and in   \cite{MS18,MSS19-1,MSS19-2} 
the name "fractional-like" instead "conformable fractional" derivative is used) presented a physical interpretation of this derivative.
Taking in account \cite{AU15, OM15, OM17, Giu18, OM18, OMFM19, Tar18}, since the derivative is local, 
we will use the name "conformable derivative" instead as introduced "conformable fractional derivative".
In \cite{KPZ19} is studied the relationship between the conformable derivatives of different order 
and is obtained that a function has a conformable derivative at a point if and only if it has a first order derivative at the same point and that holds for all points except the lower terminal. The same result is presented also in \cite{A18, A19}.
The conformable derivative in arbitrary Banach space is introduced in \cite{KPZ21}.

Although they does not fulfill all the requirements to be classified as classical fractional derivatives, 
conformable derivatives turn out to be very convenient to work with. 
For this reason, there has been a significant increase in publications (more than hundred research articles) 
concerning or using conformal derivatives in the recent years.
See for example 
\cite{AM23}--\cite{AAJ23}, 
\cite{AER23}--\cite{FRE23},
\cite{HYB24}--\cite{K24},
\cite{LSGRER24}--\cite{LKAA23},
\cite{MWR19}--\cite{MFIASC24},
\cite{RIZGR23},
\cite{SAJA23}--\cite{SRAI23},
\cite{TNOCN23}--\cite{ZFW15}  
and the references therein.

In this paper we point out and discuss some unusual consequences 
raised by the definition of the conformable derivative in the lower terminal. 
In the existing definition the conformable derivative at the lower terminal is introduced as continuous prolongation 
of the conformable derivative in the inner points and does not depend from the value of the function at the lower terminal.
This is a big difference in comparison with the definition of the first order derivative and brings a lot of tricky behaviours. 
This must be taken into account by many statements using conformable derivatives and that is why some of them are formulated not in the usual way. 
It may be noted that a little carelessness connected with not taking these unusual features into account leads to inaccuracies such as unfortunately exist in some works.
A replacement for this definition is proposed and new rewritten statements adjusted to the new definition are presented.

We are convinced that the corrected definition of a conformable derivative proposed in this article 
is more convenient and natural than the one currently existing and we recommend to be used in the future.

The paper is organized as follows: 
In Section~\ref{sec:2}
we give definitions and some important properties for the conformable derivatives. 
In Section~\ref{sec:3} we list six problems caused by the definition of the conformable derivative in  the lower terminal,
which must be avoided.
A replacement of the discussed definition is offered in Section~\ref{sec:4}.
In Section~\ref{sec:5} we list how some of the statements, presented in the Section Preliminaries,
will change with the replacement of the definition of the conformable derivative. 
Section~\ref{sec:Conclusions} is devoted to our comments of the obtained results.

%%%%%%%%%%%%%%%%%%%%%%%%%%%%%%%%%%%%%%%%%%%%%%%%%%%%%%%%%%%%%%%%%%%%%%%%%%%%%%%%%%%%%%%%%%%%%%
%
%                              Preliminaries 
%
%%%%%%%%%%%%%%%%%%%%%%%%%%%%%%%%%%%%%%%%%%%%%%%%%%%%%%%%%%%%%%%%%%%%%%%%%%%%%%%%%%%%%%%%%%%%%%

%Section 2
\section{Preliminaries} 
\label{sec:2}

For convenience and to avoid possible misunderstandings, below we recall the definitions of the 
conformable integral
and the conformable derivative (as introduced in  \cite{KHYS14}) as well as some important properties. 

%Definition 1
\begin{defn} [\cite{Abd15}, \cite{KHYS14}]   \label{d2.1}
The left-sided conformable derivative  of order ${\alpha \in (0,1]}$ at the point 
${t \in (a,\infty)}$ for a function ${f \colon [a,\infty) \to \Rset}$ is defined by 
\begin{equation}   \label{eq1}
	T^{\alpha}_a f(t) = \lim_{\theta \to 0}\left(\frac{f(t+\theta(t-a)^{1-\alpha}) - f(t)}{\theta}\right)
\end{equation}
if the limit exists. 
\end{defn}

As in the case of the classical fractional derivatives the point ${a \in \Rset}$ appearing in \eqref{eq1} 
will  be called lower terminal of the left-sided conformable derivative. 
If for $f$ the conformable derivative of order $\alpha$ exists, then we will say that $f$ is $\alpha$-differentiable.

%Definition 2
\begin{defn} [\cite{Abd15}, \cite{KHYS14}]   \label{d2.2}
The ${\alpha}$-derivative of $f$ at the lower terminal point $a$ 
in the case when $f$ is  ${\alpha}$-differentiable 	in some interval ${(a,a+\varepsilon)}$, ${\varepsilon > 0}$ 
is defined as 
$$
	T^{\alpha}_a f(a) = \lim_{t \to a+} T^{\alpha}_a f(t)
$$
if the limit exists.
\end{defn} 

If $f$ is $\alpha$-differentiable in some finite or infinite interval 
${J \subset [a,\infty)}$ %${J \subseteq J_a}$
we will write that  ${f \in C^{\alpha}_a (J, \Rset)}$, 
where with the indexes $a$ and $\alpha$ are denoted the lower terminal and the order of the conformable derivative respectively.

It may be noted, that some authors (see for example \cite{Abd15}) 
use the notation $T_{\alpha}^a $  instead $T^{\alpha}_a $, 
but we prefer to follow the traditions from the notations of the classical fractional derivatives 
and will write the lower terminal below and the order above. 

\begin{defn} [\cite{Abd15}, \cite{KHYS14}] \label{d1.3}
%If we denote by ${L_1^{loc}(\Rset,\Rset)}$ the linear space of all locally Lebesgue integrable functions 
%${f : \Rset \to \Rset}$, then for 
For each ${t > a}$ 
%and ${f \in L_1^{loc}([a,\infty),\Rset)}$ 
the left-sided conformable integral of order ${\alpha \in (0,1]}$ with lower terminal ${a \in \Rset}$ 
is defined by
\begin{equation}   \label{eq2}
I^{\alpha}_a f(t) = \int_a^t (s - a)^{\alpha - 1}\,f(s) {\rm d}s
\end{equation}
where the integral is the usual Riemann improper integral.
\end{defn}
%For example this integral exists if $f$ is locally bounded at $a$.

Since in our exposition below we will use only left-sided conformable derivatives and integrals, 
%(all definitions and statements for the right-sided conformable derivatives are mirror analogical) 
then for shortness we will omit the expression "left-sided". 

%In \cite{KHYS14} and \cite{Abd15} are presented the following theorems.

\begin{thm} [\cite{Abd15}, \cite{KHYS14}]   \label{t2.4}
Let ${\alpha \in (0,1]}$.
If $f:[a,\infty) \to \Rset$  is $\alpha$-differentiable at $t_0 \in (a,\infty)$,
then $f$ is continuous at $t_0$. 
\end{thm}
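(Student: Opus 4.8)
The plan is to
establish continuity by showing that $f(t) \to f(t_0)$ as $t \to t_0$, using the
existence of the limit defining $T^{\alpha}_a f(t_0)$ in \eqref{eq1}. First I
would introduce the substitution $\theta = h/(t_0-a)^{1-\alpha}$ so that the
increment $t_0 + \theta(t_0-a)^{1-\alpha}$ becomes simply $t_0 + h$. Since
$t_0 \in (a,\infty)$ we have $(t_0-a)^{1-\alpha} > 0$, so $\theta \to 0$ if and
only if $h \to 0$, and the defining limit rewrites as
\begin{equation} \label{eq-sub}
T^{\alpha}_a f(t_0) = \lim_{h \to 0} \frac{\bigl(f(t_0+h) - f(t_0)\bigr)\,(t_0-a)^{1-\alpha}}{h}.
\end{equation}

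Next I would exploit the standard fact that if a limit of a quotient $g(h)/h$
exists and is finite as $h \to 0$, then the numerator $g(h)$ must tend to $0$;
otherwise the quotient would blow up. Concretely, I would write
\begin{equation} \label{eq-prod}
f(t_0+h) - f(t_0) = \frac{\bigl(f(t_0+h) - f(t_0)\bigr)\,(t_0-a)^{1-\alpha}}{h}
\cdot \frac{h}{(t_0-a)^{1-\alpha}},
\end{equation}
and then take the limit as $h \to 0$. The first factor converges to the finite
value $T^{\alpha}_a f(t_0)$ by \eqref{eq-sub}, while the second factor converges
to $0$ because the constant $(t_0-a)^{1-\alpha}$ is strictly positive and finite.
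By the product rule for limits, the right-hand side of \eqref{eq-prod} tends to
$0$, hence $f(t_0+h) \to f(t_0)$, which is precisely the statement that $f$ is
continuous at $t_0$.

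The only genuine subtlety, and the step I would guard most carefully, is the
requirement $t_0 \in (a,\infty)$ rather than $t_0 = a$: this guarantees
$(t_0-a)^{1-\alpha} \neq 0$ and $\neq \infty$, so the change of variable is a
genuine bijection near $0$ and the second factor in \eqref{eq-prod} has the
clean limit $0$. This is exactly why the theorem excludes the lower terminal,
and it dovetails with the paper's central theme that the behaviour at $a$ is
anomalous. Everything else is a routine application of the algebra of limits, so
I do not anticipate any real obstacle beyond keeping track of this positivity
condition.
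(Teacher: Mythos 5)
Your proof is correct and follows essentially the same approach as the standard argument in the cited sources \cite{Abd15}, \cite{KHYS14} (the paper itself states Theorem \ref{t2.4} without proof): write the increment $f(t_0+h)-f(t_0)$ as the difference quotient times a factor tending to $0$, and apply the algebra of limits, using that the derivative is a finite limit. Your only variation is performing the change of variables $h=\theta(t_0-a)^{1-\alpha}$ at the outset rather than at the end, which has the minor merit of making explicit that the reparametrization is a bijection near $0$ precisely because $t_0>a$ forces $0<(t_0-a)^{1-\alpha}<\infty$.
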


\begin{thm} [\cite{Abd15}, \cite{KHYS14}]   \label{t2.5}
	Let ${\alpha \in (0,1]}$, $c,d \in \Rset$ and $J \subset (a,\infty)$. We assume that ${f,g \in C^{\alpha}_a (J, \Rset)}$. 
	Then for ${t \in J}$ the following relations hold:
	\begin{enumerate}
		\item [$(i)$] ${T^{\alpha}_a (cf+dg) = c\,T^{\alpha}_a f + d\,T^{\alpha}_a g}$;\vspace{0.1cm}
		\item [$(ii)$] ${T^{\alpha}_a (fg) = g\,T^{\alpha}_a f + f\,T^{\alpha}_a g}$;\vspace{0.1cm}
		\item [$(iii)$] ${T^{\alpha}_a (fg^{-1}) = \left(f\,T^{\alpha}_a g - g\,T^{\alpha}_a f\right)g^{-2}}$;\vspace{0.1cm}
		\item [$(iv)$] ${T^{\alpha}_a (1) = 0}$;\vspace{0.1cm}
		\item [$(v)$] ${T^{\alpha}_a f(t) = (t - a)^{1-\alpha}f'(t)}$ if in addition $f$ is differentiable for ${t \in J}$.
	\end{enumerate}
\end{thm}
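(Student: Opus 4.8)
The plan is to prove all five relations by reducing the conformable derivative to the ordinary first-order derivative. The central tool is the representation established earlier: at every interior point $t \in J \subset (a,\infty)$ one has $(t-a)^{1-\alpha} > 0$, so the defining limit in \eqref{eq1} can be rewritten by substituting $h = \theta(t-a)^{1-\alpha}$, which shows that $f$ is $\alpha$-differentiable at $t$ precisely when it is ordinarily differentiable there, with $T^{\alpha}_a f(t) = (t-a)^{1-\alpha} f'(t)$. This is exactly relation $(v)$, and once it is in hand the remaining parts follow by transporting the corresponding classical differentiation rules through the scalar factor $(t-a)^{1-\alpha}$.

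First I would prove $(v)$ directly from Definition~\ref{d2.1}. Fix $t \in J$ and set $c = (t-a)^{1-\alpha}$. Writing the difference quotient as
\begin{equation} \label{eqplan1}
\frac{f(t+\theta c) - f(t)}{\theta} = c \cdot \frac{f(t+\theta c) - f(t)}{\theta c},
\end{equation}
I would let $\theta \to 0$; since $c$ is a fixed positive constant, $\theta c \to 0$, and the second factor tends to $f'(t)$ by the assumed differentiability of $f$ at $t$. Hence $T^{\alpha}_a f(t) = c\, f'(t) = (t-a)^{1-\alpha} f'(t)$, giving $(v)$.

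Next I would establish $(i)$--$(iv)$ as corollaries. For functions in $C^{\alpha}_a(J,\Rset)$ the membership guarantees $\alpha$-differentiability on $J$; combined with $(v)$ this identifies $T^{\alpha}_a$ on $J$ with the operator $f \mapsto (t-a)^{1-\alpha} f'$. Linearity $(i)$ then follows from linearity of $f \mapsto f'$ and distributivity of the scalar factor. For the product rule $(ii)$ I would compute $T^{\alpha}_a(fg) = (t-a)^{1-\alpha}(fg)' = (t-a)^{1-\alpha}(f'g + fg') = g\,T^{\alpha}_a f + f\,T^{\alpha}_a g$, using the Leibniz rule and regrouping the factor $(t-a)^{1-\alpha}$ into each term. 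The quotient rule $(iii)$ is analogous, starting from $(fg^{-1})' = (f'g - fg')g^{-2}$ and multiplying through by $(t-a)^{1-\alpha}$; here one tacitly assumes $g(t) \neq 0$ on $J$ so that $g^{-1}$ is defined and differentiable. Finally $(iv)$ is immediate since the constant function $1$ has $1' = 0$, whence $T^{\alpha}_a(1) = (t-a)^{1-\alpha}\cdot 0 = 0$.

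The only genuine analytic content lies in establishing $(v)$ via the substitution in \eqref{eqplan1}, and the main point to handle carefully is that $(v)$ is stated only under the extra hypothesis of ordinary differentiability, whereas $(i)$--$(iv)$ are asserted merely for $f,g \in C^{\alpha}_a(J,\Rset)$. The subtle step is therefore to justify that $\alpha$-differentiability on the interior set $J \subset (a,\infty)$ already forces ordinary differentiability, so that $(v)$ may be invoked in deriving $(i)$--$(iv)$; this is precisely the equivalence recorded in the introduction for all points except the lower terminal, and since $J$ avoids $a$ by hypothesis, the argument goes through without the pathologies that motivate the rest of the paper. I expect no serious obstacle beyond making this reduction explicit; everything else is routine manipulation of the classical rules.
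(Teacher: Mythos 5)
Your proof is correct, but note that the paper itself gives no proof of Theorem~\ref{t2.5}: it is quoted as a known preliminary from \cite{KHYS14} and \cite{Abd15}, so the only comparison available is with those original arguments. There the route is the opposite of yours: items $(i)$--$(iv)$ are proved directly from the limit \eqref{eq1} --- linearity of limits for $(i)$, and for $(ii)$, $(iii)$ the classical add-and-subtract trick together with the continuity guaranteed by Theorem~\ref{t2.4} --- while $(v)$ comes last, by the substitution $h=\theta(t-a)^{1-\alpha}$ under the extra hypothesis that $f'$ exists. You instead prove $(v)$ first and then transport the classical rules through the factor $(t-a)^{1-\alpha}$, which requires the converse implication that $\alpha$-differentiability at an interior point forces ordinary differentiability; that is exactly Corollary~\ref{c2.7} (from \cite{KPZ19}), obtained by running your substitution backwards ($\theta = h(t-a)^{\alpha-1}$), so there is no circularity, and your reliance on it is legitimate precisely because $J\subset(a,\infty)$ avoids the lower terminal. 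Your route is arguably the more natural one in the context of this paper --- it is the very point of the Remark following Corollary~\ref{c2.7} that the differentiability hypothesis in $(v)$ is redundant on $(a,\infty)$ --- whereas the original direct proofs buy independence from the equivalence result, which was only established later. The one hypothesis you should state explicitly rather than tacitly is $g(t)\neq 0$ in $(iii)$, which the statement already presupposes for $fg^{-1}$ to be defined.
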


\begin{thm}   [\cite{KPZ19}]  \label{t2.6}
	Let ${f : [a,\infty) \to \Rset}$ and there exist a point ${t_0 \in (a,\infty)}$ and number ${\alpha \in (0, 1]}$ 
	such that the conformable derivative ${T^{\alpha}_a f(t_0)}$ with lower terminal point $a$ exists.
	
	Then the conformable derivative ${T^{\beta}_a f(t_0)}$ exists 
	for every ${\beta \in (0, 1]}$ with ${\beta \neq \alpha}$ and 
$$
	{T^{\alpha}_a f(t_0) = (t_0 - a)^{\beta-\alpha}T^{\beta}_a f(t_0)}.
$$
\end{thm}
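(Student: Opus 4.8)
The plan is to reduce the existence and the value of $T^\beta_a f(t_0)$ directly to those of $T^\alpha_a f(t_0)$ by a single change of variable in the defining limit \eqref{eq1}, the whole argument resting on the fact that $t_0$ is an interior point, so that $t_0 - a > 0$. First I would write the two relevant difference quotients side by side,
$$Q_\alpha(\theta) = \frac{f(t_0 + \theta(t_0-a)^{1-\alpha}) - f(t_0)}{\theta}, \qquad Q_\beta(\eta) = \frac{f(t_0 + \eta(t_0-a)^{1-\beta}) - f(t_0)}{\eta},$$
so that the hypothesis reads $\lim_{\theta \to 0} Q_\alpha(\theta) = T^\alpha_a f(t_0)$, while the goal is to produce $\lim_{\eta \to 0} Q_\beta(\eta)$. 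The key step is the substitution $\theta = \eta\,(t_0-a)^{\alpha-\beta}$. Since $t_0 - a > 0$, the factor $c := (t_0-a)^{\alpha-\beta}$ is a fixed positive constant, so $\eta \mapsto \theta = c\,\eta$ is a linear bijection of punctured neighbourhoods of $0$ with $\theta \to 0 \iff \eta \to 0$; moreover the perturbation inside $f$ matches exactly, because $\theta(t_0-a)^{1-\alpha} = \eta(t_0-a)^{\alpha-\beta}(t_0-a)^{1-\alpha} = \eta(t_0-a)^{1-\beta}$. Hence $Q_\alpha(\theta) = Q_\beta(\eta)/c$, and passing to the limit yields
$$T^\alpha_a f(t_0) = \lim_{\theta \to 0} Q_\alpha(\theta) = \frac{1}{c}\,\lim_{\eta \to 0} Q_\beta(\eta).$$

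From here the conclusion is immediate. Because the left-hand side exists and $1/c \neq 0$, the limit $\lim_{\eta \to 0} Q_\beta(\eta)$ must exist as well, which is precisely the assertion that $T^\beta_a f(t_0)$ exists; rearranging and using $1/c = (t_0-a)^{\beta-\alpha}$ gives $T^\alpha_a f(t_0) = (t_0-a)^{\beta-\alpha}\,T^\beta_a f(t_0)$, as claimed. An alternative route would first invoke the equivalence of $\alpha$-differentiability with ordinary differentiability at interior points together with Theorem~\ref{t2.5}$(v)$ to write $T^\gamma_a f(t_0) = (t_0-a)^{1-\gamma} f'(t_0)$ for $\gamma \in \{\alpha,\beta\}$ and then eliminate $f'(t_0)$, but this detour essentially repeats the same computation and additionally forces one to first establish the existence of $f'(t_0)$.

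The step I expect to require the most care is not any estimate but the justification that the constant rescaling $\theta = c\,\eta$ transports the limit faithfully in both directions, i.e. that the existence of one limit genuinely forces the existence of the other together with the stated factor. This hinges entirely on $c$ being a fixed nonzero scalar, which in turn relies on $t_0 > a$. This is exactly the point at which the argument would collapse at the lower terminal $t_0 = a$, since then $t_0 - a = 0$ and $(t_0-a)^{\alpha-\beta}$ is no longer a benign positive constant; that observation is consistent with the paper's central theme that the behaviour at the lower terminal is genuinely different and must be handled separately.
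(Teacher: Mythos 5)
Your proof is correct: the rescaling $\theta = (t_0-a)^{\alpha-\beta}\eta$ is legitimate precisely because $t_0-a>0$ makes it a fixed nonzero dilation, and it converts one difference quotient into the other with the factor $(t_0-a)^{\alpha-\beta}$, which gives both existence of $T^{\beta}_a f(t_0)$ and the stated identity. The paper itself states this theorem without proof (it is quoted from \cite{KPZ19}), and your substitution argument is exactly the standard proof one would give for it, including the correct observation that the argument breaks down at $t_0=a$, consistent with Theorem~\ref{t2.12} and Corollary~\ref{c2.13}.
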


\begin{cor}    [\cite{KPZ19}]  \label{c2.7}
	For a function ${f : [a,\infty) \to \Rset}$ the conformable derivative ${T^{\alpha}_a f(t_0)}$ 
	with lower terminal $a$ at a point ${t_0 \in (a,\infty)}$ for some ${\alpha \in (0,1)}$ exists 
	if and only if 
	the function ${f(t)}$ has first derivative at the point ${t_0 \in (a,\infty)}$ and
\begin{equation} \label{e1}
	{T^{\alpha}_a f(t_0) = (t_0 - a)^{1-\alpha}  f'(t_0)}.
\end{equation}
\end{cor}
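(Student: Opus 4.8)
The plan is to reduce both implications to a single change of variables in the difference quotient from Definition~\ref{d2.1}, exploiting the fact that at an inner point $t_0 > a$ the factor $(t_0-a)^{1-\alpha}$ is a fixed, strictly positive constant. First I would set $c := (t_0-a)^{1-\alpha} > 0$ and substitute $h = \theta c$ (equivalently $\theta = h/c$). Because $c$ is a nonzero constant, the map $\theta \mapsto h$ is a bijection of a punctured neighbourhood of $0$ onto a punctured neighbourhood of $0$, so $\theta \to 0$ if and only if $h \to 0$, and under the substitution the conformable difference quotient becomes
$$\frac{f(t_0 + \theta(t_0-a)^{1-\alpha}) - f(t_0)}{\theta} = c\,\frac{f(t_0+h) - f(t_0)}{h}.$$

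Next I would read off both directions of the equivalence from this identity. Since $c \neq 0$, the limit of the left-hand side as $\theta \to 0$ exists if and only if the limit of the ordinary difference quotient on the right as $h \to 0$ exists, and the latter is by definition the existence of $f'(t_0)$. Passing to the limit then yields $T^{\alpha}_a f(t_0) = c\,f'(t_0) = (t_0-a)^{1-\alpha} f'(t_0)$, which is exactly \eqref{e1}. This handles the "if" and the "only if" simultaneously.

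As an alternative I would note that the statement can also be obtained from Theorem~\ref{t2.6} by taking $\beta = 1$: since $\alpha \in (0,1)$ we have $\alpha \neq 1$, so the existence of $T^{\alpha}_a f(t_0)$ is equivalent to that of $T^1_a f(t_0)$, with $T^{\alpha}_a f(t_0) = (t_0-a)^{1-\alpha} T^1_a f(t_0)$; one then observes that for $\alpha = 1$ the weight $(t-a)^{1-\alpha}$ reduces to $1$, so $T^1_a f(t_0)$ is precisely the classical derivative $f'(t_0)$.

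The computations are routine, so there is no serious obstacle; the one point that genuinely requires care — and the reason the statement is restricted to $t_0 \in (a,\infty)$ rather than to the lower terminal — is the requirement that $c = (t_0-a)^{1-\alpha}$ be finite and nonzero, which is what makes the change of variables a genuine bijection near $0$. I expect this to be where attention must be focused: at $t_0 = a$ the constant degenerates to $0$ for $\alpha < 1$, the substitution collapses, and the clean equivalence between the conformable and the ordinary derivative breaks down. This degeneracy is precisely the pathology the remainder of the paper addresses.
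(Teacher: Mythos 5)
Your argument is correct, and in fact you give two valid proofs. Note that the paper itself does not prove Corollary~\ref{c2.7} at all: it is quoted from \cite{KPZ19} without proof, and its positioning immediately after Theorem~\ref{t2.6} indicates that the intended derivation is exactly your alternative route, namely applying Theorem~\ref{t2.6} with $\beta=1$ in both directions (existence of $T^{\alpha}_a f(t_0)$ gives existence of $T^{1}_a f(t_0)$ and vice versa, since the theorem holds for every $\alpha\in(0,1]$) together with the observation that the $\alpha=1$ difference quotient in Definition~\ref{d2.1} is the ordinary one, so $T^{1}_a f(t_0)=f'(t_0)$ at interior points. Your primary argument --- the substitution $h=\theta(t_0-a)^{1-\alpha}$ with $c=(t_0-a)^{1-\alpha}>0$ a fixed nonzero constant, so that the conformable quotient equals $c$ times the ordinary quotient and the two limits exist simultaneously --- is a genuinely different and more self-contained route: it proves the equivalence and the formula \eqref{e1} in one stroke from Definition~\ref{d2.1} alone, without invoking Theorem~\ref{t2.6} (and indeed it would yield Theorem~\ref{t2.6} itself as a byproduct, since both $T^{\alpha}_a f(t_0)$ and $T^{\beta}_a f(t_0)$ get tied to $f'(t_0)$). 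You also correctly isolate the point where the argument uses $t_0>a$: at $t_0=a$ the constant $c$ degenerates to $0$ for $\alpha<1$, the substitution is no longer a bijection near $0$, and the equivalence fails, which is precisely the pathology of Definition~\ref{d2.2} that the rest of the paper is devoted to repairing.
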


\begin{rem}
Corollary \ref{c2.7} shows the close connection between the conformable derivative and the first order derivative on 
$(a,\infty)$. It shows also, that the additional condition $f$ to be differentiable 
in Pos. {\it (v)} in Theorem \ref{t2.5} is not necessary.  
This result is obtained also in \cite{A18} in the particular case for conformable derivatives with lower terminal zero. 
See also \cite{A19}. 
\end{rem}

The next propositions treat the problem of the left and right inverse operator of the conformable derivative.
Because we did not have found the same statements as Proposition \ref{p2.11} and Proposition \ref{p2.9} in the literature, 
we also present their proofs. 

\begin{prop}   [\cite{Abd15}, \cite{KHYS14}]  \label{p2.8}
	Let ${f : [a,\infty) \to \Rset)}$ be continuous on $[a,\infty)$. 
	
	Then ${T^{\alpha}_a I^{\alpha}_a f(t_0) = f(t_0)}$ for ${t_0 \in (a,\infty)}$.
\end{prop}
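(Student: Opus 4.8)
The plan is to set $F(t) = I^{\alpha}_a f(t) = \int_a^t (s-a)^{\alpha-1} f(s)\,\mathrm{d}s$ and to reduce the identity $T^{\alpha}_a F(t_0) = f(t_0)$ to two ingredients: the fundamental theorem of calculus for $F$ at the interior point $t_0$, and the representation $T^{\alpha}_a F(t_0) = (t_0-a)^{1-\alpha} F'(t_0)$ furnished by Theorem~\ref{t2.5}(v) (equivalently Corollary~\ref{c2.7}). Once both are in hand, the two powers $(t_0-a)^{1-\alpha}$ and $(t_0-a)^{\alpha-1}$ cancel and leave exactly $f(t_0)$.

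First I would record that the integrand $g(s) := (s-a)^{\alpha-1} f(s)$ is continuous on $(a,\infty)$, since $f$ is continuous and $s \mapsto (s-a)^{\alpha-1}$ is continuous for $s > a$. Because $f$ is continuous on the compact interval $[a, t_0]$ it is bounded there, and $(s-a)^{\alpha-1}$ is integrable near $a$ for $\alpha \in (0,1]$, so the improper Riemann integral defining $F(t_0)$ converges and $F$ is well defined.

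The differentiation of $F$ at $t_0$ is where the improperness at the lower terminal has to be handled with some care, and this I regard as the one delicate point. I would circumvent it by fixing any $c \in (a, t_0)$ and writing $F(t) = F(c) + \int_c^t g(s)\,\mathrm{d}s$ for $t$ near $t_0$. On a closed interval $[c, t_0+\delta] \subset (a,\infty)$ the function $g$ is continuous (the singularity at $a$ plays no role here), so the second term is a proper integral of a continuous integrand and the classical fundamental theorem of calculus applies at $t_0$. Hence $F$ is differentiable at $t_0$ with
$$
F'(t_0) = g(t_0) = (t_0-a)^{\alpha-1} f(t_0).
$$

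Finally I would invoke Theorem~\ref{t2.5}(v): since $F$ is differentiable at $t_0 \in (a,\infty)$, its conformable derivative there is
$$
T^{\alpha}_a F(t_0) = (t_0-a)^{1-\alpha} F'(t_0) = (t_0-a)^{1-\alpha}(t_0-a)^{\alpha-1} f(t_0) = f(t_0),
$$
which is the assertion. For $\alpha \in (0,1)$ one may use Corollary~\ref{c2.7} in place of Theorem~\ref{t2.5}(v), and for $\alpha = 1$ the conformable derivative is literally $F'$, so the statement reduces directly to the fundamental theorem of calculus. The essential content of the proof is thus the local character of differentiation at $t_0$, which isolates the computation from the improper behaviour at the lower terminal $a$.
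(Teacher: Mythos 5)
Your proof is correct. One point of context: the paper itself gives no proof of Proposition~\ref{p2.8} --- it is quoted from \cite{Abd15} and \cite{KHYS14}, and the authors explicitly state they only supply proofs for Propositions~\ref{p2.11} and~\ref{p2.9}. So there is nothing in the paper to compare against; measured against the argument in the cited sources, yours is essentially the same: differentiate $F = I^{\alpha}_a f$ by the fundamental theorem of calculus and cancel the powers $(t_0-a)^{1-\alpha}(t_0-a)^{\alpha-1}$. Your write-up is in fact more careful on two counts. First, you isolate the improper behaviour at the lower terminal by splitting $F(t) = F(c) + \int_c^t g(s)\,{\rm d}s$ with $c \in (a,t_0)$, so that only a proper integral of a continuous function is differentiated; the original references gloss over this. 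Second, you correctly note that citing Theorem~\ref{t2.5}(v) alone would be slightly circular, since that theorem presupposes $F \in C^{\alpha}_a(J,\Rset)$, which is what you are trying to establish; appealing instead to Corollary~\ref{c2.7} for $\alpha \in (0,1)$ (where the \emph{if and only if} yields existence of $T^{\alpha}_a F(t_0)$ from existence of $F'(t_0)$) and to the definition directly for $\alpha = 1$ closes that gap cleanly.
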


\begin{prop}    \label{p2.11}
	Let for  the function ${f : [a,\infty) \to \Rset}$ for some ${\alpha \in (0,1)}$ be fulfilled
 ${f \in C_a^{\alpha}((a,\infty),\Rset)}$.
	
	Then for every ${t_0 \in (a,\infty)}$ we have that 
\begin{equation} \label {eq4}
{I^{\alpha}_a\,T^{\alpha}_a f(t_0) = f(t_0) - \lim\limits_{t \to a+} f(t)}.
\end{equation}
 if the limit exists and is finite. % ????????????????????????
\end{prop}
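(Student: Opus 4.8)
The plan is to reduce the composite operator $I^{\alpha}_a T^{\alpha}_a$ to an ordinary (improper) integral of $f'$, in which the two fractional weights cancel, and then to invoke the fundamental theorem of calculus. First I would fix $t_0 \in (a,\infty)$ and exploit Corollary~\ref{c2.7}: since $f \in C^{\alpha}_a((a,\infty),\Rset)$, at every point $s \in (a,\infty)$ the ordinary derivative $f'(s)$ exists and
$$
T^{\alpha}_a f(s) = (s-a)^{1-\alpha} f'(s).
$$
In particular $f$ is differentiable, hence by Theorem~\ref{t2.4} continuous, on all of $(a,\infty)$, which is what will allow the classical integration theory to be applied on compact subintervals bounded away from $a$.

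Substituting this expression into the definition \eqref{eq2} of the conformable integral gives
$$
I^{\alpha}_a T^{\alpha}_a f(t_0) = \int_a^{t_0} (s-a)^{\alpha-1}\,(s-a)^{1-\alpha} f'(s)\,\mathrm{d}s = \int_a^{t_0} f'(s)\,\mathrm{d}s,
$$
because the exponents satisfy $(\alpha-1)+(1-\alpha)=0$, so the singular weight disappears entirely. This is the computational heart of the argument: once the weights cancel, the conformable integral of the conformable derivative is nothing but the ordinary integral of $f'$.

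The remaining work is to evaluate this integral, which is improper at the lower terminal $s=a$. Following Definition~\ref{d1.3}, I would interpret it as $\lim_{t\to a+}\int_t^{t_0} f'(s)\,\mathrm{d}s$; on each compact interval $[t,t_0] \subset (a,t_0]$ the function $f$ is differentiable, so the fundamental theorem of calculus yields $\int_t^{t_0} f'(s)\,\mathrm{d}s = f(t_0) - f(t)$. Letting $t \to a+$ and using the hypothesis that $\lim_{t\to a+} f(t)$ exists and is finite, I obtain
$$
I^{\alpha}_a T^{\alpha}_a f(t_0) = f(t_0) - \lim_{t\to a+} f(t),
$$
as claimed.

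The main obstacle is not the algebra but the improper character of the integral at $a$: one must justify that $\int_a^{t_0} f'$ converges and that the fundamental theorem of calculus applies up to (but not including) the lower terminal. This is precisely where the hypothesis that $\lim_{t\to a+} f(t)$ exists and is finite enters — it is exactly the condition guaranteeing convergence of the improper integral and hence the existence of $I^{\alpha}_a T^{\alpha}_a f(t_0)$. I would also emphasize that the value $f(a)$ itself plays no role whatsoever in this identity, which is the very anomaly the paper sets out to highlight.
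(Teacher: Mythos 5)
Your proof is correct and follows essentially the same route as the paper's own proof: both use Corollary~\ref{c2.7} to write $T^{\alpha}_a f(s)=(s-a)^{1-\alpha}f'(s)$, cancel the weights inside $I^{\alpha}_a$, interpret the resulting integral $\int_a^{t_0}f'(s)\,\mathrm{d}s$ as improper at the lower terminal, and conclude via the fundamental theorem of calculus and the assumed finite limit $\lim_{t\to a+}f(t)$. No substantive difference.
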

\begin{proof}
Applying \eqref{eq2} and \eqref{e1} we obtain 
\begin{equation} \nonumber
I^{\alpha}_a\,T^{\alpha}_a f(t_0) 
=  \int_a^{t_0} (s - a)^{\alpha - 1}   (s - a)^{1-\alpha}  f'(s)  {\rm d}s
\end{equation}
which is an improper integral of second kind because %the expression 
$(s - a)^{\alpha - 1}   (s - a)^{1-\alpha}  f'(s)$ 
is not defined for $s=a$.
Then if  $\lim\limits_{t \to a+} f(t)$ exists and is finite we obtain for ${t_0 \in (a,\infty)}$
\begin{equation} \nonumber
\begin{split}
I^{\alpha}_a\,T^{\alpha}_a f(t_0) 
& =  \int_a^{t_0} (s - a)^{\alpha - 1}   (s - a)^{1-\alpha}  f'(s)  {\rm d}s \\
& =\lim_{\varepsilon \to 0+} \int_{a+\varepsilon}^{t_0}   (s - a)^{\alpha - 1}   (s - a)^{1-\alpha}  f'(s)  {\rm d}s \\
&= \lim_{\varepsilon \to 0+} \int_{a+\varepsilon}^{t_0}   f'(s)  {\rm d}s 
=\lim_{\varepsilon \to 0+} (f(t_0)-f(a+\varepsilon)) \\
& =f(t_0)-\lim_{\varepsilon \to 0+} f(a+\varepsilon)
\end{split}
\end{equation}
 \end{proof}

\begin{prop}    \label{p2.9}
	Let the following conditions hold:
	\begin{enumerate}
		\item [$1.$] The function ${f : [a,\infty) \to \Rset}$ is right continuous at $a$.
		\item [$2.$] For some ${\alpha \in (0,1)}$ the function ${f \in C_a^{\alpha}((a,\infty),\Rset)}$.
	\end{enumerate}
	
	Then for every ${t_0 \in (a,\infty)}$ we have that 
\begin{equation} \label {eq3}
{I^{\alpha}_a\,T^{\alpha}_a f(t_0) = f(t_0) - f(a)}.
\end{equation}
\end{prop}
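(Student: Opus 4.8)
The plan is to obtain this as a direct specialization of Proposition~\ref{p2.11}. Condition~$2$ supplies verbatim the hypothesis $f \in C_a^{\alpha}((a,\infty),\Rset)$ needed there, so the only additional task is to check that the limit $\lim_{t \to a+} f(t)$ appearing in \eqref{eq4} exists and is finite, and then to identify its value with $f(a)$.

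First I would note that condition~$1$, the right continuity of $f$ at $a$, is by definition precisely the statement that $\lim_{t \to a+} f(t) = f(a)$. Since $f(a)$ is a genuine real number, this limit both exists and is finite, so the standing assumption of Proposition~\ref{p2.11} is fulfilled. Next I would invoke Proposition~\ref{p2.11} for the fixed point $t_0 \in (a,\infty)$ to write
\begin{equation} \nonumber
I^{\alpha}_a\,T^{\alpha}_a f(t_0) = f(t_0) - \lim_{t \to a+} f(t),
\end{equation}
and then substitute $\lim_{t \to a+} f(t) = f(a)$ to arrive at \eqref{eq3}.

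I do not expect any genuine technical obstacle, since the argument is a one-line consequence of the preceding proposition. The only point that deserves emphasis — and indeed the reason this statement merits separate billing — is the conceptual role of the right-continuity hypothesis. Because the conformable derivative at interior points is, by Corollary~\ref{c2.7}, expressible purely through $f'$ on $(a,\infty)$, it carries no information about the value $f(a)$ itself; hence from $T^{\alpha}_a f$ alone one can at best recover $f$ up to its limiting value from the right. Condition~$1$ is exactly what forces that limit to coincide with $f(a)$, thereby converting the $\lim_{t \to a+} f(t)$ on the right-hand side of \eqref{eq4} into the clean boundary term $f(a)$ of \eqref{eq3}.
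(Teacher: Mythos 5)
Your proof is correct and follows exactly the paper's own argument: invoke Proposition~\ref{p2.11}, noting that right continuity at $a$ means $\lim_{t \to a+} f(t) = f(a)$ exists and is finite, and substitute. Your closing remarks about why the right-continuity hypothesis is essential also match the paper's discussion in Remark~\ref{r2.10}.
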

\begin{proof}
Condition 1 states that $f$ is right continuous at $a$, i.e.  $\lim\limits_{t \to a+} f(t)=f(a)$ and
then the statement follows from Proposition \ref{p2.11}.
\end{proof}

\begin{rem} \label{r2.10}
Note that the condition 1. in Proposition \ref{p2.9} is essential and can not be replaced with changing the condition 2. to  
${f \in C_a^{\alpha}([a,\infty),\Rset)}$.
The reason is the Definition \ref{d2.2} where $T_a^\alpha f(a)$ does not depend from the value of $f(a)$ 
and allows for example $f$ to have a jump in the low terminal point $a$. 
Hence without the condition 1. any involvement of $f(a)$ in the right side of \eqref{eq3} will be a mistake.
The problem is well illustrated with the simple example below. %new
\end{rem}

%%%%%%%%%%%%%%%%%%%%%%%%%%%%%%%%%%  NEW

\begin{exmp}
Let $\alpha \in (0,1)$ and  for $f:[a,\infty) \to \Rset$ be fulfilled ${f \in C_a^{\alpha}([a,\infty),\Rset)}$ 
and $f$ be right continuous at $a$.
Let define $g:[a,\infty) \to \Rset$ as follows:
\begin{equation} \nonumber
g(t) =
\begin{cases}
& f(t),  \ \ \ \ \ \ \ \text{for} \ t \in (a,\infty), \\
& f(t) + c, \ \ \text{for} \ t=a,
\end{cases}
\end{equation}
where $c \in \Rset$ and $c \neq 0$.

Since ${f \in C_a^{\alpha}([a,\infty),\Rset)}$ and taking in account Definition \ref{d2.2} we can conclude, 
that $T_a^\alpha g(t) = T_a^\alpha f(t)$ for every $t \in [a,\infty)$.
Hence for every $t \in (a,\infty)$ it holds  also $I^{\alpha}_a\,T^{\alpha}_a g(t) =I^{\alpha}_a\,T^{\alpha}_a f(t)$,
but obviously $ g(t) - g(a) = f(t) - f(a) - c \neq f(t) - f(a)$.

That is why for the function $g$ Proposition \ref{p2.11} is applicable, but Proposition \ref{p2.9} not. 
\end{exmp}

%%%%%%%%%%%%%%%%%%%%%%%%%%%%%%%%%%

Now let see what happens in the lower terminal of the conformable derivative.

\begin{thm}   [\cite{KPZ19}]   \label{t2.12}
	Let ${f : [a,\infty) \to \Rset}$ and there exists a number ${\alpha \in (0, 1]}$ 
	such that the conformable derivative ${T^{\alpha}_a f(a)}$ with lower terminal point $a$ exists.
	
	Then the conformable derivative ${T^{\beta}_a f(a)}$ also exists for every ${\beta \in (0,\alpha)}$ and 
$$
	T^{\beta}_a f(a) = 0.
$$
\end{thm}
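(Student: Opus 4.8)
The plan is to reduce the statement at the lower terminal to the already established relationship between conformable derivatives of different orders at interior points, and then pass to the limit. By Definition~\ref{d2.2}, the hypothesis that $T^\alpha_a f(a)$ exists means that $f$ is $\alpha$-differentiable on some interval $(a, a+\varepsilon)$ and that the finite limit $T^\alpha_a f(a) = \lim_{t \to a+} T^\alpha_a f(t)$ exists. In particular, for every interior point $t_0 \in (a, a+\varepsilon)$ the conformable derivative $T^\alpha_a f(t_0)$ is defined, so Theorem~\ref{t2.6} is available at each such $t_0$.

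First I would apply Theorem~\ref{t2.6} at each interior point $t_0 \in (a, a+\varepsilon)$. Since $\beta \in (0,\alpha)$ satisfies $\beta \neq \alpha$, that theorem guarantees that $T^\beta_a f(t_0)$ exists and yields the identity $T^\alpha_a f(t_0) = (t_0 - a)^{\beta - \alpha} T^\beta_a f(t_0)$, which I would rearrange into
\begin{equation} \nonumber
T^\beta_a f(t_0) = (t_0 - a)^{\alpha - \beta}\, T^\alpha_a f(t_0).
\end{equation}

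Second, I would take the limit as $t_0 \to a+$, which by Definition~\ref{d2.2} is precisely what is needed to compute $T^\beta_a f(a)$. Because $\beta < \alpha$, the exponent $\alpha - \beta$ is strictly positive, so $(t_0 - a)^{\alpha - \beta} \to 0$. At the same time $T^\alpha_a f(t_0) \to T^\alpha_a f(a)$, a finite number by hypothesis, so this factor remains bounded as $t_0 \to a+$. The product of a vanishing factor and a bounded factor tends to zero, which simultaneously establishes that the defining limit for $T^\beta_a f(a)$ exists and that its value equals $0$.

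I do not expect a serious obstacle here; the only point requiring care is confirming that the limit $\lim_{t_0 \to a+} T^\beta_a f(t_0)$ genuinely exists (not merely that the interior derivatives $T^\beta_a f(t_0)$ exist pointwise), and this is secured by the boundedness of $T^\alpha_a f(t_0)$ near $a$, which itself follows from the assumed existence of the finite limit $T^\alpha_a f(a)$.
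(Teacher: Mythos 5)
Your proof is correct; note that the paper itself states Theorem~\ref{t2.12} without proof (citing \cite{KPZ19}), and your argument --- applying Theorem~\ref{t2.6} at interior points $t_0 \in (a,a+\varepsilon)$ to get $T^{\beta}_a f(t_0) = (t_0-a)^{\alpha-\beta}\,T^{\alpha}_a f(t_0)$, then letting $t_0 \to a+$ so that a vanishing factor times a bounded factor yields $0$ in accordance with Definition~\ref{d2.2} --- is precisely the natural route this reduction suggests and the one the cited source takes. You also correctly isolate the only delicate point: the existence of the limit defining $T^{\beta}_a f(a)$ is secured by the boundedness of $T^{\alpha}_a f(t_0)$ near $a$, which follows from the assumed existence of the finite limit $T^{\alpha}_a f(a)$.
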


It is obviously that the following corollary of the above Theorem \ref{t2.12} holds:

\begin{cor}    \label{c2.13}
	Let ${f : [a,\infty) \to \Rset}$ and there exists a number ${\alpha \in (0, 1]}$ 
	such that the conformable derivative ${T^{\alpha}_a f(a)}$ with lower terminal point $a$ exists
and $T^{\alpha}_a f(a) \neq 0$.
	
	Then for every ${\beta \in (\alpha,1]}$  the conformable derivative ${T^{\beta}_a f(a)}$ does not exist.
\end{cor}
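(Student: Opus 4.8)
The plan is to argue by contradiction, using Theorem~\ref{t2.12} as a black box. The content of Theorem~\ref{t2.12} is that the mere existence of a conformable derivative at the lower terminal at some order forces all lower-order conformable derivatives at the terminal to exist and vanish. Since the hypothesis of the corollary hands us a nonvanishing derivative at order $\alpha$, the strategy is to show that the existence of $T^{\beta}_a f(a)$ for any $\beta > \alpha$ would drag the order-$\alpha$ derivative down to $0$, contradicting $T^{\alpha}_a f(a) \neq 0$.

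Concretely, I would first fix an arbitrary $\beta \in (\alpha,1]$ and suppose, toward a contradiction, that $T^{\beta}_a f(a)$ exists. Then I would apply Theorem~\ref{t2.12} \emph{with $\beta$ playing the role of the order whose terminal derivative exists}: the theorem is stated for any order in $(0,1]$, and $\beta \in (\alpha,1] \subset (0,1]$, so it applies. Its conclusion is that $T^{\gamma}_a f(a)$ exists and equals $0$ for every $\gamma \in (0,\beta)$.

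The key step is to check that $\alpha$ lies in this admissible range $(0,\beta)$. This is immediate from the standing assumptions: we have $\alpha \in (0,1]$ with $\alpha > 0$, and by the choice $\beta \in (\alpha,1]$ we have $\beta > \alpha$, so indeed $\alpha \in (0,\beta)$. Taking $\gamma = \alpha$ in the conclusion of Theorem~\ref{t2.12} therefore yields $T^{\alpha}_a f(a) = 0$, which directly contradicts the hypothesis $T^{\alpha}_a f(a) \neq 0$. Hence $T^{\beta}_a f(a)$ cannot exist, and since $\beta \in (\alpha,1]$ was arbitrary, the corollary follows.

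There is essentially no analytic obstacle here, since all the limit-computation work is already absorbed into Theorem~\ref{t2.12}; the only point that requires a moment's care is the bookkeeping of the order ranges, namely verifying that $\alpha$ falls strictly below $\beta$ so that Theorem~\ref{t2.12} can be invoked at $\gamma = \alpha$. I would make that inclusion explicit to keep the endpoint cases (in particular $\beta = 1$) transparent.
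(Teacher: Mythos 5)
Your proof is correct and is exactly the argument the paper intends: the paper states Corollary~\ref{c2.13} as an immediate (``obvious'') consequence of Theorem~\ref{t2.12}, and your contradiction argument --- assuming $T^{\beta}_a f(a)$ exists for some $\beta \in (\alpha,1]$, applying Theorem~\ref{t2.12} at order $\beta$, and concluding $T^{\alpha}_a f(a)=0$ since $\alpha \in (0,\beta)$ --- is precisely the spelled-out version of that deduction. Nothing is missing; you have simply made explicit the step the paper leaves to the reader.
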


Now we can summarize for the standard (in our opinion) case,
when for some $\alpha \in (0,1)$ the conformable derivative 
${T^{\alpha}_a f(a)}$ exists and ${T^{\alpha}_a f(a)=c\neq 0}$:

\begin{equation} \nonumber
T_a^\beta f(a) =
\begin{cases}
& \text{does not exist},                  \ \text{for} \ \beta \in (\alpha,1]; \\
& c, \ \ \ \ \ \ \ \ \ \ \ \ \ \ \ \ \ \ \text{for} \ \beta=\alpha; \\
& 0, \ \ \ \ \ \ \ \ \ \ \ \ \ \ \ \ \ \ \text{for} \ \beta \in (0,\alpha).
\end{cases}
\end{equation}

And for $f'(a)$ we have no information, because in the general case $f'(a)$ does not match with $T_a^1 f(a)$.

\begin{rem}
This situation is not very comfortable, 
because by the most real application of the fractional derivatives 
it is usual  first a fractional model of worldwide phenomena to be constructed 
and then by changing the fractional order the best approximation to the empirical data to be sought.
\end{rem}

%%%%%%%%%%%%%%%%%%%%%%%%%%%%%%%%%%%%%%%%%%%%%

\section{Problem Statement} \label{sec:3}

And here we have the main problem - 
how to redefine the conformable derivative $T_a^\alpha$ 
in the lower terminal $a$ (i.e. to replace Definition \ref{d2.2} with another one) so that:
\begin{enumerate}
\item [$1.$] The definition to look natural - 
ie. to be analogous to the definition of the first order derivative 
as well as to the definition of the derivative in the lower terminal for any of the classical fractional derivatives
(in the case of Caputo derivative it is $0$).
\item [$2.$] ${T_a^\alpha f(a)}$ to depend from $f(a)$ 
and from $\alpha$-differentiability at $a$ to follow right continuity of $f$ at $a$.
\item [$3.$] ${T_a^\alpha f(a)}$ and ${T_a^\beta f(a)}$ to may or may not exist simultaneously
for every $\alpha , \beta \in (0,1]$.
\item [$4.$] ${T_a^\alpha f(a)}$ and $f'(a)$ to may or may not exist simultaneously
for every $\alpha \in (0,1]$.
\item [$5.$] To be fulfilled  $T_a^1 f(a) =f'(a)$.
\item [$6.$] The formulas in Theorem \ref{t2.6} and Corollary \ref{c2.7} 
for a direct connection between the conformable derivatives of different order to be fulfilled for the lower terminal too.
\end{enumerate}

It is clear, that with Definition \ref{d2.2} no one of the listed points above is fulfilled.

%%%%%%%%%%%%%%%%%%%%%%%%%%%%%%%%%%%%%%%%

\section{A Solution of the Problem} \label{sec:4}

As replacement of Definition  \ref{d2.2} we offer the following definition:

\begin{defn}   \label{d4.1}
The conformable ${\alpha}$-derivative of $f$ at the lower terminal point $a$ 
%$T_a^\alpha f(a)$
exists if and only if the right first derivative $f'(a)$ in $a$ exists 
and is defined as 
\begin{equation} \nonumber
T_a^\alpha f(a) =
\begin{cases}
& f'(a),  \  \ \text{for} \ \alpha=1; \\
& 0, \ \ \ \ \ \ \ \text{for} \ \alpha \in (0,1).
\end{cases}
\end{equation}
\end{defn}  

Obviously all desired properties, described in the previous Section \ref{sec:3} will be fulfilled
if Definition \ref{d2.2} is replaced with Definition \ref{d4.1}.

%%%%%%%%%%%%%%%%%%%%%%%%%%%%%%%%%%%%%%%%%%

\section{Some Consequences} \label{sec:5}

Let the conformable derivative be defined with Definition \ref{d2.1} and Definition \ref{d4.1}.

Below we list 
how some of the statements, presented in the Section Preliminaries,
will change with the replacement of the definition of the conformable derivative.
%The proofs do not differ significantly and will be omitted.

In Theorem \ref{t2.4}, Theorem \ref{t2.5}, Theorem \ref{t2.6} and Corollary \ref{t2.5} 
the lower terminal can be included in the range as shown below:

\begin{thm}  \label{t5.1}
Let ${\alpha \in (0,1]}$.
If $f:[a,\infty) \to \Rset$  is $\alpha$-differentiable at $t_0 \in [a,\infty)$,
then $f$ is continuous at $t_0$. 
\end{thm}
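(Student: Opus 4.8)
The plan is to split the argument according to whether $t_0$ is an interior point or the lower terminal. For $t_0 \in (a,\infty)$ nothing new is required: this case is precisely the content of Theorem~\ref{t2.4}, which already guarantees continuity at every interior point at which $f$ is $\alpha$-differentiable. The only genuinely new case is $t_0 = a$, and this is where the replacement Definition~\ref{d4.1} does all the work.

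For the case $t_0 = a$ I would argue as follows. By Definition~\ref{d4.1}, the assertion that $f$ is $\alpha$-differentiable at $a$ is \emph{by definition} equivalent to the existence of the right first derivative $f'(a)$. Hence from the hypothesis I immediately extract that
\[
f'(a) = \lim_{h \to 0+} \frac{f(a+h) - f(a)}{h}
\]
exists and is finite. The remaining step is the standard implication ``differentiable $\Rightarrow$ continuous'' in its one-sided form: the existence of this finite right derivative forces $\lim_{t \to a+}\bigl(f(t)-f(a)\bigr) = 0$, so $f$ is right continuous at $a$.

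Finally I would observe that, since $f$ is defined only on $[a,\infty)$, continuity at the endpoint $a$ means exactly right continuity at $a$, there being no left-hand limit to match. Thus right continuity is continuity at $t_0 = a$, which closes the case and completes the proof.

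I do not anticipate any serious obstacle. The essential point — and the whole purpose of the reformulation — is that Definition~\ref{d4.1} builds the existence of $f'(a)$ directly into $\alpha$-differentiability at the lower terminal, so continuity at $a$ becomes an automatic consequence rather than something that can fail, as it could under the old Definition~\ref{d2.2} which permitted a jump at $a$. The one point requiring a moment's care is to phrase continuity at $a$ correctly as the one-sided notion appropriate to a function on $[a,\infty)$.
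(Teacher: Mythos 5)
Your proposal is correct and follows exactly the paper's own argument: the interior case is delegated to Theorem~\ref{t2.4}, and at $t_0 = a$ the $\alpha$-differentiability is unpacked via Definition~\ref{d4.1} into existence of the right derivative $f'(a)$, from which right continuity (which is continuity at the endpoint of $[a,\infty)$) follows by the standard one-sided argument. Your write-up merely makes explicit the difference-quotient step and the remark that endpoint continuity means right continuity, both of which the paper leaves implicit.
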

\begin{proof}
For $t_0 \in (a,\infty)$ the proof will be the same as the proof of Theorem \ref{t2.4} and will be omitted.

Let $t_0=a$. Since $f$  is $\alpha$-differentiable at $a$, from Definition \ref{d4.1} it follows that 
$f$ has right first derivative at $a$ and hence it is right continuous at $a$.  
\end{proof}

\begin{thm}  \label{t5.2}
	Let ${\alpha \in (0,1]}$, $c,d \in \Rset$ and $J \subset [a,\infty)$. 
We assume that ${f,g \in C^{\alpha}_a (J, \Rset)}$. 
	Then for ${t \in J}$ the following relations hold:
	\begin{enumerate}
		\item [$(i)$] ${T^{\alpha}_a (cf+dg) = c\,T^{\alpha}_a f + d\,T^{\alpha}_a g}$;\vspace{0.1cm}
		\item [$(ii)$] ${T^{\alpha}_a (fg) = g\,T^{\alpha}_a f + f\,T^{\alpha}_a g}$;\vspace{0.1cm}
		\item [$(iii)$] ${T^{\alpha}_a (fg^{-1}) = \left(f\,T^{\alpha}_a g - g\,T^{\alpha}_a f\right)g^{-2}}$;\vspace{0.1cm}
		\item [$(iv)$] ${T^{\alpha}_a (1) = 0}$;\vspace{0.1cm}
		\item [$(v)$] ${T^{\alpha}_a f(t) = (t - a)^{1-\alpha}f'(t)}$ 
(excluding the case of $T_a^1 f(a)$).
	\end{enumerate}
\end{thm}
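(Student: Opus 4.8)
The plan is to split the verification into the interior case $t_0 \in (a,\infty)$ and the endpoint case $t_0 = a$, since only the latter involves the new Definition \ref{d4.1}. For $t_0 \in (a,\infty)$ every one of the identities (i)--(v) coincides with the corresponding assertion of Theorem \ref{t2.5}, and (v) in fact holds without the auxiliary differentiability hypothesis by Corollary \ref{c2.7}; so no new argument is needed there and I would simply quote those results. It remains to treat $t_0 = a$, where membership $f,g \in C^{\alpha}_a(J,\Rset)$ together with Definition \ref{d4.1} forces the right first derivatives $f'(a)$ and $g'(a)$ to exist.

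At the endpoint I would separate the two values that Definition \ref{d4.1} distinguishes. When $\alpha = 1$ one has $T^{1}_a h(a) = h'(a)$ for every $1$-differentiable $h$, so (i)--(iv) are exactly the $\alpha=1$ specialisations of the relations in Theorem \ref{t2.5}, now read for the ordinary right first derivative at $a$ and applied to $h = cf+dg$, $h=fg$ and $h=fg^{-1}$; each such combination is differentiable at $a$ by the classical differentiation rules (with $g(a)\neq 0$ understood for the quotient), and (iv) records the trivial fact that the derivative of a constant vanishes. Property (v) is explicitly excluded in this case, precisely because the factor $(t-a)^{1-\alpha}$ degenerates to the indeterminate $0^0$ at $t=a$ when $\alpha=1$.

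When $\alpha \in (0,1)$ the situation is even simpler: by Definition \ref{d4.1} the conformable derivative at $a$ of any $\alpha$-differentiable function equals $0$. Hence for (i)--(iii) I would first note that $cf+dg$, $fg$ and $fg^{-1}$ are again right-differentiable at $a$ and therefore $\alpha$-differentiable at $a$, so both sides of each identity collapse to $0$: on the right these read $c\cdot 0 + d\cdot 0$, $g(a)\cdot 0 + f(a)\cdot 0$ and $(f(a)\cdot 0 - g(a)\cdot 0)g(a)^{-2}$, all zero, while on the left $T^{\alpha}_a(\cdot)(a)=0$. Item (iv) holds since $T^{\alpha}_a(1)(a)=0$. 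For (v) at $t=a$ the right-hand side is $(a-a)^{1-\alpha}f'(a) = 0^{1-\alpha}f'(a) = 0$ because $1-\alpha>0$, which matches $T^{\alpha}_a f(a)=0$; thus (v) does hold at the endpoint for every $\alpha\in(0,1)$, and only the single case $T^{1}_a f(a)$ must be set aside.

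The whole argument is routine verification rather than a deep estimate, and the one point that genuinely requires care is the boundary behaviour of the factor $(t-a)^{1-\alpha}$ in (v). For $\alpha<1$ this factor vanishes as $t\to a$ and reconciles the formula with the value $0$ prescribed by Definition \ref{d4.1}, whereas for $\alpha=1$ it produces $0^0$ and the formula genuinely fails to describe $T^{1}_a f(a)=f'(a)$; recognising and isolating this single exceptional case is the real content of the parenthetical restriction in (v), and I would make sure the proof states it explicitly.
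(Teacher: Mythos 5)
Your proposal is correct and follows essentially the same route as the paper: reduce the interior points $t\in J$, $t\neq a$ to Theorem \ref{t2.5}, then treat $t=a$ by splitting into $\alpha=1$ (where everything reduces to classical first-derivative rules) and $\alpha\in(0,1)$ (where both sides of each identity vanish by Definition \ref{d4.1}). The only cosmetic difference is that you verify item $(v)$ at the endpoint directly, whereas the paper defers that check to the proof of Corollary \ref{c5.4}, which uses the same observation.
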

\begin{proof}
For statements $(i) - (iv)$: \\
The proof for $t \in J, t\neq a$ will be the same as the proof of Theorem \ref{t2.5} and will be omitted.
For the case when $t=a$ and $\alpha=1$ the statements are fulfilled, because they will deal with first derivatives.
For the case when $t=a$ and $\alpha \in (0,1)$ the statements are obviously fulfilled, 
because the $\alpha$-derivatives at $a$ will be equal to zero.

For statement $(v)$ see the proof of Corollary \ref{c5.4} 
\end{proof}

\begin{thm}    \label{t5.3}
	Let ${f : [a,\infty) \to \Rset}$ and there exist a point ${t_0 \in [a,\infty)}$ and number ${\alpha \in (0, 1)}$ 
	such that the conformable derivative ${T^{\alpha}_a f(t_0)}$ with lower terminal point $a$ exists.
	
	Then the conformable derivative ${T^{\beta}_a f(t_0)}$ exists 
	for every ${\beta \in (0, 1]}$ with ${\beta \neq \alpha}$ and 
$$
	{T^{\alpha}_a f(t_0) = (t_0 - a)^{\beta-\alpha}T^{\beta}_a f(t_0)},
$$
or for the case when $t_0=a$ and $\beta < \alpha$
$$
	{T^{\beta}_a f(t_0) = (t_0 - a)^{\alpha-\beta}T^{\alpha}_a f(t_0)}.
$$

\end{thm}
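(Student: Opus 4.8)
The plan is to split the argument according to whether the point $t_0$ lies strictly inside the half-line or coincides with the lower terminal. For $t_0 \in (a,\infty)$ the statement is nothing but Theorem \ref{t2.6} restricted to $\alpha \in (0,1)$: the existence of $T^{\alpha}_a f(t_0)$ there yields the existence of $T^{\beta}_a f(t_0)$ for every $\beta \in (0,1]$ with $\beta \neq \alpha$, together with the relation $T^{\alpha}_a f(t_0) = (t_0-a)^{\beta-\alpha}T^{\beta}_a f(t_0)$. Since $t_0 - a > 0$ in this case, the factor $(t_0-a)^{\beta-\alpha}$ is a well-defined positive number irrespective of the sign of $\beta - \alpha$, and the second displayed formula is obtained from the first simply by multiplying through by $(t_0-a)^{\alpha-\beta}$. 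Hence this case requires no new work and I would dispose of it by citing Theorem \ref{t2.6} directly.

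The substantive case is $t_0 = a$, where I would invoke Definition \ref{d4.1}. The hypothesis is that $T^{\alpha}_a f(a)$ exists for some $\alpha \in (0,1)$; by Definition \ref{d4.1} this is equivalent to the existence of the right first derivative $f'(a)$, and it forces $T^{\alpha}_a f(a) = 0$. But once $f'(a)$ exists, Definition \ref{d4.1} immediately guarantees that $T^{\beta}_a f(a)$ exists for \emph{every} $\beta \in (0,1]$, with the explicit values $T^{\beta}_a f(a) = 0$ for $\beta \in (0,1)$ and $T^{1}_a f(a) = f'(a)$. This already establishes the existence claim at the terminal, so it remains only to verify that the appropriate one of the two displayed formulas holds.

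The single point needing care — and the reason the statement carries two formulas rather than one — is the behaviour of the factor $(t_0-a)^{\beta-\alpha} = 0^{\beta-\alpha}$ at $t_0 = a$. For $\beta > \alpha$ the exponent is positive, so $0^{\beta-\alpha} = 0$ and the first formula reads $0 = 0 \cdot T^{\beta}_a f(a)$, which holds because $T^{\beta}_a f(a)$ is finite (equal to $0$ when $\beta \in (\alpha,1)$ and to $f'(a)$ when $\beta = 1$). For $\beta < \alpha$, however, the first formula would involve $0^{\beta-\alpha}$ with a negative exponent, which is undefined; this is precisely where I would switch to the second formula $T^{\beta}_a f(a) = (t_0-a)^{\alpha-\beta}T^{\alpha}_a f(a)$. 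Here the exponent $\alpha - \beta$ is positive, so $0^{\alpha-\beta} = 0$ and the identity becomes $0 = 0 \cdot 0$, which holds. Thus the only genuine obstacle is bookkeeping about the sign of the exponent at $t_0 = a$, and the two-formula phrasing of the theorem is exactly the device that sidesteps the undefined expression $0^{\text{negative}}$.
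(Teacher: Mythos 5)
Your proof is correct and takes essentially the same approach as the paper: for $t_0 \in (a,\infty)$ it cites Theorem \ref{t2.6}, and for $t_0 = a$ it reduces everything to Definition \ref{d4.1}. The paper's own proof of the terminal case is just the single line that it ``follows directly from Definition \ref{d4.1}''; your analysis of the sign of $\beta-\alpha$ and of why the second displayed formula is needed when $\beta < \alpha$ is exactly the detail that line leaves implicit.
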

\begin{proof}
For $t_0 \in (a,\infty)$ the proof will be the same as the proof of Theorem \ref{t2.6} and will be omitted.
For $t_0=a$ the statement follows directly  from Definition \ref{d4.1}.  
\end{proof}

\begin{cor}    \label{c5.4}
	For a function ${f : [a,\infty) \to \Rset}$ the conformable derivative ${T^{\alpha}_a f(t_0)}$ 
	with lower terminal $a$ at a point ${t_0 \in [a,\infty)}$ for some ${\alpha \in (0,1)}$ exists 
	if and only if 
	the function ${f(t)}$ has first derivative at the point ${t_0 \in [a,\infty)}$ and
\begin{equation} \label{ee1}
	{T^{\alpha}_a f(t_0) = (t_0 - a)^{1-\alpha}  f'(t_0)}.
\end{equation}
\end{cor}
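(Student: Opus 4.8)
The plan is to split the argument according to whether $t_0$ lies in the open interval $(a,\infty)$ or coincides with the lower terminal $a$, since under the corrected setup these two situations are governed by different parts of the definition: interior points by Definition~\ref{d2.1}, and the terminal by Definition~\ref{d4.1}.

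First I would dispose of the case $t_0 \in (a,\infty)$. Here the conformable derivative at $t_0$ is determined entirely by Definition~\ref{d2.1}, which is untouched by the replacement of Definition~\ref{d2.2} with Definition~\ref{d4.1}. Consequently the assertion for interior points is exactly the content of Corollary~\ref{c2.7}, and nothing new has to be proved: $T^{\alpha}_a f(t_0)$ exists if and only if $f'(t_0)$ exists, and in that case formula \eqref{ee1} holds.

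Next I would treat the boundary case $t_0 = a$, which is the only genuinely new part. By Definition~\ref{d4.1}, for $\alpha \in (0,1)$ the derivative $T^{\alpha}_a f(a)$ exists precisely when the right first derivative $f'(a)$ exists, so the equivalence claimed in the corollary is immediate from the definition itself. It then remains only to check that the value prescribed by Definition~\ref{d4.1}, namely $T^{\alpha}_a f(a) = 0$, matches the right-hand side of \eqref{ee1}. Substituting $t_0 = a$ gives $(a-a)^{1-\alpha} f'(a) = 0^{1-\alpha} f'(a)$, and since $\alpha \in (0,1)$ forces $1-\alpha \in (0,1)$, we have $0^{1-\alpha} = 0$; hence the right-hand side equals $0 = T^{\alpha}_a f(a)$ and \eqref{ee1} holds at $t_0 = a$ as well.

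I do not expect any substantial obstacle, as the corollary is essentially a verification that Definition~\ref{d4.1} was chosen to be consistent with the interior formula. The only point demanding a moment's care is the evaluation of $0^{1-\alpha}$: it is crucial that $\alpha$ is restricted to $(0,1)$, so that the exponent $1-\alpha$ is strictly positive and the power vanishes. The excluded endpoint $\alpha = 1$ would instead produce the indeterminate $0^{0}$, which is exactly the case singled out in Theorem~\ref{t5.2}(v), and this is why the present corollary is stated only for $\alpha \in (0,1)$.
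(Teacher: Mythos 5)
Your proposal is correct and follows essentially the same route as the paper: the interior case $t_0\in(a,\infty)$ is delegated to Corollary~\ref{c2.7}, and the case $t_0=a$ is read off from Definition~\ref{d4.1}. Your explicit verification that $(a-a)^{1-\alpha}f'(a)=0$ precisely because $1-\alpha>0$ (and your remark on why $\alpha=1$ must be excluded) merely spells out the details the paper compresses into ``follows from Definition~\ref{d4.1}.''
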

\begin{proof}
For $t_0 \in (a,\infty)$ the proof will be the same as the proof of Corollary \ref{c2.7} and will be omitted.
For $t_0=a$ the statement follows from Definition \ref{d4.1}.  
\end{proof}

The main advantage of the changed definition can be seen in the new theorem below, 
which replaces both Proposition \ref{p2.9} and  Proposition \ref{p2.11}.

\begin{thm}    \label{t5.5}
	Let the  for some ${\alpha \in (0,1)}$ the function ${f \in C_a^{\alpha}([a,\infty),\Rset)}$.
		
	Then for every ${t_0 \in (a,\infty)}$ we have that 
\begin{equation} \label {eq4}
{I^{\alpha}_a\,T^{\alpha}_a f(t_0) = f(t_0) - f(a)}.
\end{equation}
\end{thm}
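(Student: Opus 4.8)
The plan is to exploit the new Definition~\ref{d4.1}, which forces right continuity of $f$ at $a$, so that the troublesome boundary value $f(a)$ is automatically the correct limit and no extra hypothesis is needed. The key observation is that the hypothesis ${f \in C_a^{\alpha}([a,\infty),\Rset)}$ now includes $\alpha$-differentiability at the lower terminal $a$; by Definition~\ref{d4.1} this existence is equivalent to the existence of the right first derivative $f'(a)$, and by Theorem~\ref{t5.1} it guarantees that $f$ is right continuous at $a$, i.e. $\lim_{t \to a+} f(t) = f(a)$.

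First I would record that on the open interval $(a,\infty)$ the function $f$ is $\alpha$-differentiable, so by Corollary~\ref{c5.4} (equivalently Corollary~\ref{c2.7}) it is differentiable there and $T^{\alpha}_a f(s) = (s-a)^{1-\alpha} f'(s)$ for every $s \in (a,\infty)$. Substituting this into the definition \eqref{eq2} of the conformable integral gives, for each fixed $t_0 \in (a,\infty)$,
\begin{equation} \nonumber
I^{\alpha}_a\,T^{\alpha}_a f(t_0)
= \int_a^{t_0} (s-a)^{\alpha-1}\,(s-a)^{1-\alpha}\, f'(s)\,{\rm d}s
= \int_a^{t_0} f'(s)\,{\rm d}s,
\end{equation}
where the integrand on the right is defined for all $s \in (a,t_0]$. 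This is exactly the computation already carried out in the proof of Proposition~\ref{p2.11}, so I would treat it as established: evaluating the improper integral as $\lim_{\varepsilon \to 0+} \int_{a+\varepsilon}^{t_0} f'(s)\,{\rm d}s = \lim_{\varepsilon \to 0+}\bigl(f(t_0) - f(a+\varepsilon)\bigr)$ yields
\begin{equation} \nonumber
I^{\alpha}_a\,T^{\alpha}_a f(t_0) = f(t_0) - \lim_{\varepsilon \to 0+} f(a+\varepsilon).
\end{equation}

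The final step is to identify the limit. Here is where the new definition does all the work: unlike in Proposition~\ref{p2.11}, I need not assume separately that $\lim_{t \to a+} f(t)$ exists and is finite, nor (as in Proposition~\ref{p2.9}) impose right continuity as an extra condition. Both of these are now consequences of the single hypothesis $f \in C_a^{\alpha}([a,\infty),\Rset)$ via Theorem~\ref{t5.1}, which gives $\lim_{\varepsilon \to 0+} f(a+\varepsilon) = f(a)$. Substituting this produces $I^{\alpha}_a\,T^{\alpha}_a f(t_0) = f(t_0) - f(a)$, which is \eqref{eq4}. I do not anticipate a genuine analytic obstacle, since the integral computation is routine and already appears earlier; the only point requiring care — and the real content of the theorem — is the clean invocation of Theorem~\ref{t5.1} to supply right continuity automatically, thereby unifying Propositions~\ref{p2.9} and~\ref{p2.11} into one clause-free statement under the corrected Definition~\ref{d4.1}.
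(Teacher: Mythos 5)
Your proposal is correct and follows essentially the same route as the paper: both derive right continuity of $f$ at $a$ from the new Definition~\ref{d4.1} (existence of the right derivative $f'(a)$), and then reduce to the improper-integral computation of Proposition~\ref{p2.11} (the paper does this by citing Proposition~\ref{p2.9}, which you simply unfold explicitly). The only cosmetic difference is that you invoke Theorem~\ref{t5.1} for the right continuity, whereas the paper states it directly from right differentiability; these are the same observation.
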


\begin{proof}
Since  ${f \in C_a^{\alpha}([a,\infty),\Rset)}$ then $f$ is $\alpha$-differentiable at $a$. 
Then according Definition \ref{d4.1} $f$ is right differentiable at $a$ and hence right continuous at $a$. 
The rest of the proof is the same as the proof of Proposition \ref{p2.9}.
\end{proof}

%%%%%%%%%%%%%%%%%%%%%%%%%%%%%%%%%%%%%%%%%%%%%%%%%%%%%%%%%%%%%%%%%%%%%%%%%%%%%%%%%%%%%%%%%%%%%%
%   
%                        Conclusions
%
%%%%%%%%%%%%%%%%%%%%%%%%%%%%%%%%%%%%%%%%%%%%%%%%%%%%%%%%%%%%%%%%%%%%%%%%%%%%%%%%%%%%%%%%%%%%%%

%Section 5
\section{Conclusions}
\label{sec:Conclusions}

In this paper we point out and discuss some unusual consequences 
raised by the definition of the conformable derivative in the lower terminal. 

A replacement for this definition is proposed and new rewritten statements adjusted to the new definition are presented.

Yes, it is about changing the behaviour of the conformable derivative at a single point, 
but it is an important one. 
It is sufficient to mention that for example exactly this point is usually used to construct initial value problems 
for any kind of differential equations. %new

We are convinced that the definition of the conformable derivative thus corrected 
is more convenient and natural than the one currently existing and would be well to be used in the future.

%\authorcontributions{These authors contributed equally to this work.}
%
%\conflictsofinterest{The authors declare no conflict of interest.}

%%%%%%%%%%%%%%%%%%%%%%%%%%%%%%%%%%%%%%%%%%%%%%%%%%%%%%%%%%%
%
%                References 
%
%%%%%%%%%%%%%%%%%%%%%%%%%%%%%%%%%%%%%%%%%%%%%%%%%%%%%%%%%%%


\begin{thebibliography}{99}


\bibitem{A18}
Abdelhakim, A.,  
Precise interpretation of the conformable fractional derivative
{\em arXiv:1805.02309[math.CA]},  {\bf 2018}.

\bibitem{A19}
Abdelhakim, A.,
The flaw in the conformable calculus: It is conformable because it is not fractional. 
{\em Fractional Calculus and Applied Analysis}, {\bf 2019}, {\em 22(2)},242--254. 




% Reference 1
\bibitem{Abd15}
Abdeljawad, T. On conformable fractional calculus. 
{\em J. Comput. Appl. Math.} {\bf 2015}, {\em 279}, 57--66. 

%\bibitem{AAAJO18}
%Abdeljawad, T.; Agarwal, R.; Alzabut, J.; Jarad, F.; Ozbekler, A. 
%Lyapunov-type inequalities for mixed non-linear forced differential equations within conformable derivatives. 
%{\em J. Inequal. Appl.} {\bf 2018}, {\em Paper No. 143}, 17 pp. 
	
% Reference 2
%\bibitem{AHK15}
%Abdeljawad, T.; Al Horani, M.; Khalil, R. Conformable fractional semigroups of operators. {\em J. Semigroup Theory Appl.} {\bf 2015},  {\em Art. ID 7}, 1--9.
	
% Reference 3
%\bibitem{AAJ17}
%Abdeljawad, T.; Alzabut, J.; Jarad, F. A generalized Lyapunov-type inequality in the frame of conformable derivatives. {\em Adv. Differ. Equ.} {\bf 2017}, {\em 321}, 1--10.


\bibitem{AM23}
Ahmad J, Mustafa Z. 
Dynamics of exact solutions of nonlinear resonant Schrödinger equation utilizing conformable derivatives and stability analysis. 
{\em The European Physical Journal D.} {\bf 2023} Jun;77(6):123.


\bibitem{ASAAM23}
Albosaily S, Elsayed EM, Albalwi MD, Alesemi M, Mohammed WW. 
The analytical stochastic solutions for the stochastic potential Yu–Toda–Sasa–Fukuyama equation with conformable derivative using different methods. 
{\em Fractal and Fractional}. {\bf 2023} Oct 28;7(11):787.

\bibitem{AAJ23}
Ali A, Ahmad J, Javed S. 
Stability analysis and novel complex solutions to the malaria model utilising conformable derivatives. 
{\em The European Physical Journal Plus}. {\bf 2023} Mar;138(3):1-7.
	
% Reference 4
%\bibitem{AA17}
%Al-Rifae, M.; Abdeljawad, T. Fundamental results of conformable Sturm-Liouville eigenvalue problems. {\em Complexity} {\bf 2017}, {\em Art. ID 3720471}, 1--7.
	
% Reference 5
\bibitem{AU15}
Anderson, D. R.; Ulness, D. J. Newly defined conformable derivatives. 
{\em Adv. Dyn. Syst. Appl.} {\bf 2015}, {\em 10(2)}, 109--137. 
	


\bibitem{AER23}
Asghari Y, Eslami M, Rezazadeh H. 
Soliton solutions for the time-fractional nonlinear differential-difference equation with conformable derivatives in the ferroelectric materials. 
{\em Optical and Quantum Electronics}. {\bf 2023} Apr;55(4):289.



% Reference 6
%\bibitem{BLNS15}
%Batarfi, H.; Losada, J.; Nieto, J. J.; Shammakh, W. Three-point boundary value problems for conformable fractional differential equations. {\em J. Funct. Spaces} {\bf 2015}, {\em vol. 2015}, Article ID 706383.
	
% Reference 7
%\bibitem{BHT16}
%Benkhettou, N.; Hassani, S.; Torres, D. F. M. A conformable fractional calculus on arbitrary time scales. {\em J. King Saud Univ. Sci.} {\bf 2016}, {\em vol. 28(1)}, 93--98.


\bibitem{CCTV23}
Candelario G, Cordero A, Torregrosa JR, Vassileva MP. 
Solving nonlinear transcendental equations by iterative methods with conformable derivatives: a general approach.
{\em Mathematics}. {\bf 2023} Jun 3;11(11):2568.


\bibitem{CBG23}
Cevikel AC, Bekir A, Guner O. 
Exploration of new solitons solutions for the Fitzhugh–Nagumo-type equations with conformable derivatives. 
{\em International Journal of Modern Physics B}. {\bf 2023} Sep 20;37(23):2350224.


%\bibitem{CB23}
%Cevikel AC, Bekir A. 
%Assorted hyperbolic and trigonometric function solutions of fractional equations with conformable derivative in shallow water. 
%International Journal of Modern Physics B. 2023 Apr 10;37(09):2350084.

	
% Reference 8
%\bibitem{Chu15}
%Chung, W. Fractional Newton mechanics with conformable fractional derivative. {\em J. Comput. Appl. Math.} {\bf 2015}, {\em 290}, 150--158.
	
% Reference 9
%\bibitem{Die10}
%Diethelm, K. 
%{\em The analysis of fractional differential equations, an application-oriented expositionusing differential operators of Caputo type}, Lecture Notes in Mathematics, vol. 2004, Springer-Verlag, Berlin, 2010. %MR2680847 (2011j:34005). 

% Reference 10
%\bibitem{ER15}
%Eslami, M.; Rezazadeh, H. The first integral method for Wu-Zhang system with conformable time-fractional derivative. {\em Calcolo} {\bf 2015}, {\em 53(3)}, 475--485.
%DOI 10.1007/s10092-015-0158-8



\bibitem{FKM24}
Farooq A, Khan MI, Ma WX. 
Exact solutions for the improved mKdv equation with conformable derivative by using the Jacobi elliptic function expansion method. 
{\em Optical and Quantum Electronics}. {\bf 2024} Jan 30;56(4):542.

\bibitem{FRE23}
Fujioka J, Rodríguez RF, Espinosa-Cerón Á. 
Conformable and Caputo’s Derivatives in Generalized Viscoelastic Models. 
{\em Applied Mathematics}. {\bf 2023} Sep 14;14(9):580-601.

% Reference 11
\bibitem{Giu18}
Giusti, A. A comment on some new definitions of fractional derivative. 
{\em Nonlinear Dyn.} {\bf 2018}, 1--7. %https://doi.org/ 10.1007/s11071-018-4289-8 

% Reference 12
%\bibitem{GUC15}
%Gokdogan, A.; Unal, E.; Celik, E. Conformable fractional Bessel equation and Bessel functions. {\em arXiv preprint arXiv:1506.07382, 2015 - arxiv.org.}

% Reference 13
%\bibitem{GUC16}
%Gokdogan, A.; Unal, E.; Celik, E. Existence and uniqueness theorems for sequential linear conformable fractional differential equations. {\em Miskolc Mathematical Notes} {\bf 2016}, {\em 17(1)}, 267--279.

% Reference 14
%\bibitem{HK14}
%Hammad, M. A.; Khalil, R. Abel's formula and Wronskian for conformable fractional differential equations. {\em Int. J. Differ. Equ. Appl.} {\bf 2014}, {\em 13}, 177--183.

\bibitem{HYB24}
Has A, Yılmaz B, Baleanu D. 
On the geometric and physical properties of conformable derivative. 
{\em Mathematical Sciences and Applications E-Notes}. {\bf 2024};12(2):60-70.

\bibitem{HSMACS23}
Hosseini K, Sadri K, Mirzazadeh M, Ahmadian A, Chu YM, Salahshour S. 
Reliable methods to look for analytical and numerical solutions of a nonlinear differential equation arising in heat transfer with the conformable derivative. 
{\em Mathematical Methods in the Applied Sciences}. {\bf 2023} Jul 15;46(10):11342-54.


\bibitem{IRAG23}
Islam MT, Ryehan S, Abdullah FA, Gómez-Aguilar JF. T
he effect of Brownian motion and noise strength on solutions of stochastic Bogoyavlenskii model alongside conformable fractional derivative. 
{\em Optik}. {\bf 2023} Sep 1;287:171140.


% Reference 15
%\bibitem{IN16}
%Iyiola, O. S.; Nwaeze, E. R. Some new results on the new conformable fractional calculus with application using D'Alambert approach. {\em Progr. Fract. Differ. Appl.} {\bf 2016}, {\em 2}, 1--7.

% Reference 16
%\bibitem{JChP12}
%Jiao, Z.; Chen, Y. Q.; Podlubny, I. 
%{\em Distributed-Order Dynamic Systems: Stability, Simulation, Applications and Perspectives}, Springer- Brief, Springer-Verlag, 2012.

\bibitem{J24}
Jneid M. 
Results on partial approximate controllability of fractional control systems in Hilbert spaces with conformable derivatives.
{\em AIP Advances}. {\bf 2024} Feb 1;14(2).

\bibitem{K24}
Kengne E. 
Conformable derivative in a nonlinear dispersive electrical transmission network. 
{\em Nonlinear Dynamics}. {\bf 2024} Feb;112(3):2139-56.



% Reference 17
\bibitem{KHYS14}
Khalil, R.; Al Horani, M.; Yousef, A.; Sababheh, M. A new definition of fractional derivative. 
{\em J. Comput. Appl. Math.} {\bf 2014} {\em 264}, 65--70. 
	
% Reference 18
\bibitem{KST06}
Kilbas, A. A.; Srivastava, H. M.; Trujillo, J. J.
{\em Theory and Applications of Fractional Differential Equations},
Elsevier Science B.V, Amsterdam, 2006.
	
% Reference 19
%\bibitem{Kir94}
%Kiryakova, V. 
%{\em Generalized Fractional Calculus and Applications}, 
%Longman Scientific \& Technical, Harlow, copublished in the United States with John Wiley \& Sons, Inc., New York, 1994. 


\bibitem{KPZ19}
Kiskinov H., Petkova M., Zahariev A., 
About the Cauchy problem for nonlinear system with conformable derivatives and variable delays, 
{\em AIP Conference Proceedings} {\bf 2019} 2172, 050006.

\bibitem{KPZ21}
Kiskinov H., Petkova M., Zahariev A., 
Some results about conformable derivatives in Banach spaces and an application to the partial differential equations, 
{\em AIP Conference Proceedings} {\bf 2021} 2333, 120002.

\bibitem{LSGRER24}
Lavín-Delgado JE, Solís-Pérez JE, Gómez-Aguilar JF, Razo-Hernández JR, Etemad S, Rezapour S. 
An improved object detection algorithm based on the Hessian matrix and conformable derivative. 
{\em Circuits, Systems, and Signal Processing}. {\bf 2024} Apr 18:1-57.

% Reference 20
\bibitem{LT17}
Lazo, M. J.; Torres, D. F. M. Variational calculus with conformable fractional derivatives. 
{\em IEEE/CAA JOURNAL OF AUTOMATICA SINICA} {\bf 2017}, {\em Vol. 4, No. 2}, 340--352.



\bibitem{LADB23}
Liaqat MI, Akgül A, De la Sen M, Bayram M. 
Approximate and exact solutions in the sense of conformable derivatives of quantum mechanics models using a novel algorithm. 
{\em Symmetry}. {\bf 2023} Mar 17;15(3):744.

\bibitem{LKAA23}
Liaqat MI, Khan A, Alqudah MA, Abdeljawad T. 
Adapted homotopy perturbation method with Shehu transform for solving conformable fractional nonlinear partial differential equations. 
{\em Fractals}. {\bf 2023} Mar 16;31(02):2340027.



% Reference 21
\bibitem{Mar18}
Martynyuk, A. A. On the stability of the solutions of fractional-like equations of perturbed motion. 
{\em Dopov. Nats. Akad. Nauk Ukr. Mat. Prirodozn. Tekh. Nauki} {\bf 2018}, {\em no. 6}, 9--16. (in Russian)
%ISSN 1025-6415, MR3861057 

% Reference 22
\bibitem{MS18}
Martynyuk, A. A.; Stamova, I. M. Fractional-like derivative of Lyapunov-type functions and applications to stability analysis of motion.
 {\em Electron. J. Differ. Equ.} {\bf 2018}, {\em No. 62}, 1--12.

% Reference 23
\bibitem{MSS19-1}
Martynyuk, A. A.; Stamov, G.; Stamova, I. Integral estimates of the solutions of fractional-like equations of perturbed motion. 
{\em Nonlinear Analysis: Modelling and Control} {\bf 2019}, {\em Vol. 24, No. 1}, 138--149.
%ISSN 1392-5113
%https://doi.org/10.15388/NA.2019.1.8

\bibitem{MSS19-2}
Martynyuk, A. A.; Stamov, G.; Stamova, I.
Practical stability analysis with respect to manifolds and boundedness of differential equations with fractional-like derivatives. 
{\em Rocky Mountain J. Math.}  {\bf 2019}, {\em Vol. 49 , No. 1}, 211--233. 

% Reference 24
\bibitem{MWR19}
Mengmeng, Li; Wang, JinRong; O'Regan, D. Existence and Ulam's stability for conformable fractional differential equations with constant coefficients. 
{\em Bull. Malays. Math. Sci. Soc.} {\bf 2019}, {\em 42(4)}, 1791--1812.
%https://doi.org/10.1007/s40840-017-0576-7

%\bibitem{MER20}
%Mohammadnezhad, V.; Eslami, M.; Rezazadeh, H. 
%Stability analysis of linear conformable fractional differential equations system with time delays. 
%{\em Bol. Soc. Parana. Mat.} {\bf 2019}, {\em 38 (3), No. 6}, 159--171.


\bibitem{MFIASC24}
Murad MA, Faridi WA, Iqbal M, Arnous AH, Shah NA, Chung JD. 
Analysis of Kudryashov’s equation with conformable derivative via the modified Sardar sub-equation algorithm. 
{\em Results in Physics}. {\bf 2024} May 1;60:107678.


% Reference 25
\bibitem{OM15}
Ortigueira, M.D.; Machado, J.T. What is a fractional derivative? 
{\em J. Comput. Phys.} {\bf 2015}, {\em 293}, 4--13.
	
% Reference 26
\bibitem{OM17}
Ortigueira, M.; Machado, J. Which Derivative? 
{\em Fractal Fract} {\bf 2017}, {\em 1(1)}. %https://doi.org/10.3390/fractalfract1010003. 
	
% Reference 27
\bibitem{OM18}
Ortigueira, M. D.; Machado J. T. A critical analysis of the Caputo-Fabrizio operator. 
{\em Communications in Nonlinear Science and Numerical Simulation} {\bf 2018}, 59--608.
%{611. DOI: 10.1016/j.cnsns.2017.12.001 
%\url{https://dx.doi.org/10.1007/s11075-016-0237-1}
	
% Reference 28
\bibitem{OMFM19}
Ortigueira, M. D.; Martynyuk, V.; Fedula, M.; Machado, J. T. 
The failure of certain fractional calculus operators in two physical models, 
{\em Fractional Calculus and Applied Analysis} {\bf 2019}, {\em 22 (2)}, 255--270.%DOI: 10.1515/fca-2019-0017
%\url{https://doi.org/10.1007/BF03167248} 
	
% Reference 29
\bibitem{Pod99}
Podlubny, I. 
{\em Fractional Differential Equation}, Academic Press, San Diego, 1999.  
	
% Reference 30
%\bibitem{PP16}
%Pospisil, M.; Pospisilova L. S., Sturm's theorems for conformable fractional differential equations. {\em Math. Commun.} {\bf 2016}, {\em 21(2)}, 273--281.



\bibitem{RIZGR23}
Rehman HU, Iqbal I, Zulfiqar H, Gholami D, Rezazadeh H. 
Stochastic soliton solutions of conformable nonlinear stochastic systems processed with multiplicative noise. 
{\em Physics Letters A}. {\bf 2023} Oct 28;486:129100.

	
% Reference 31
%\bibitem{SS17}
%Stamova, I.; Stamov, G. 
%{\em Functional and impulsive differential equations of fractional order. Qualitative analysis and applications}, Boca Raton, FL: CRC Press, 2017.		

% Reference 32
\bibitem{SKM93}
Samko, S. G.; Kilbas, A. A.; Marichev, O. I. {\em Fractional Integrals and Derivatives: Theory and Applications}, 
Gordon and Breach Science Publishers, Switzerland, 1993.

\bibitem{SAJA23}
Shah K, Abdeljawad T, Jarad F, Al-Mdallal Q. 
On nonlinear conformable fractional order dynamical system via differential transform method.
{\em Computer Modeling in Engineering \& Sciences}, {\bf 2023} 136, (2), 1457--1472.





\bibitem{SRAI23}
Shahen NH, Rahman MM, Alshomrani AS, Inc M. 
On fractional order computational solutions of low-pass electrical transmission line model with the sense of conformable derivative. 
{\em Alexandria Engineering Journal}. {\bf 2023} Oct 15;81:87-100.


% Reference 33
\bibitem{Tar18}
Tarasov, V. E. No Nonlocality. No Fractional Derivative.
 {\em Communications in Nonlinear Science and Numerical Simulation} {\bf 2018}, {\em 62}, 157--163.
%{163. DOI: 10.1016/j.cnsns.2018.02.019 
%\url{https://link.springer.com/article/10.1023/A:1021902825707}

% Reference 34
%\bibitem{TN16}
%Tariboon, J.; Ntouyas, S.K. Oscillation of impulsive conformable fractional differential equations. {\em Open Math.} {\bf 2016}, {\em 14}, 497--508.


\bibitem{TNOCN23}
Tuan NH, Nguyen VT, O’REGAN DO, Can NH, Nguyen VT. 
New results on continuity by order of derivative for conformable parabolic equations.
{\em Fractals}. {\bf 2023} Apr 8;31(04):2340014.

\bibitem{W23}
Wang K. 
Construction of fractal soliton solutions for the fractional evolution equations with conformable derivative. 
{\em Fractals}. {\bf 2023} Feb 2;31(01):2350014.

\bibitem{ZZWDCZN23}
Zhang L, Zhou H, Wang X, Deng T, Chen C, Zhang H, Nagel T. 
Modeling the visco-elastoplastic behavior of deep coal based on conformable derivative. 
{\em Mechanics of Time-Dependent Materials}. {\bf 2023} Jan 27:1-21.


% Reference 35
\bibitem{ZFW15}
Zheng, A.; Feng, Y.; Wang, W. 
The Hyers-Ulam stability of the conformable fractional differential equation. 
{\em Math. Aeterna} {\bf 2015}, {\em  5(3)}, 485--492.






\end{thebibliography}
\end{document}